\theoremstyle{plain}%
 \newtheorem{theorem}{Theorem}
\theoremstyle{remark}
\theoremstyle{definition}
\newtheorem{example}{Example}
\begin{document}

\begin{center}
{\Large On the minimal polynomials of the arguments of dilogarithm ladders}
\end{center}

\begin{center}
{\textsc{John M. Campbell}} 

 \ 

\end{center}

\begin{abstract} 
 Letting $L_{n}(N, u)$ denote a polylogarithm ladder of weight $n$ and index $N$ with $u$ as an algebraic number, there is a rich history surrounding how 
 mathematical objects of this form can be constructed for a given weight or index. This raises questions as to what minimal polynomials for $u$ are permissible 
 in such constructions. Classical relations for the dilogarithm $\text{Li}_{2}$ provide families of weight-2 ladders in such a way so that the base equations 
 for $u$ consist of a fixed number of terms, and subsequent constructions for dilogarithm ladders rely on sporadic cases whereby $u$ is defined via a 
 cyclotomic equation, as in the supernumary ladders due to Abouzahra and Lewin. This motivates our construction of an infinite family of dilogarithm ladders 
 so as to obtain arbitrarily many terms with nonzero coefficients for the minimal polynomials for $u$. Our construction relies on a derivation of a 
 dilogarithm identity introduced by Khoi in 2014 via the Seifert volumes of manifolds obtained from the use of Dehn surgery. 
\end{abstract}

\noindent {\footnotesize \emph{MSC:} 11M35, 33B30}

\vspace{0.1in}

\noindent {\footnotesize \emph{Keywords:} dilogarithm ladder, polylogarithm, closed form, cyclotomic equation, Seifert volume}

\section{Introduction}\label{sectionIntro}
 The polylogarithm function $\text{Li}_{n}$ may be defined for $|z| \leq 1$ so that $\text{Li}_{n}(z) := \sum_{k=1}^{\infty} \frac{z^{k}}{k^n}$ and for a 
 given parameter $n$, with the $n = 2$ case yielding the dilogarithm. The dilogarithm function arises in deep areas of algebraic $K$-theory, differential 
 geometry, and mathematical physics. The study of $\text{Li}_{2}$ in its own right often leads to similarly deep areas within number theory. Lewin is one the 
 most pioneering figures in the history of the dilogarithm function, with reference to standard textbooks concerning this function 
 \cite{Lewin1981,Lewin1991text}. In this regard, Lewin was instrumental in the development of dilogarithm ladders. In this paper, we construct an infinite 
 family of dilogarithm ladders such that, in contrast to previously known ladder constructions, the minimal polynomials for the algebraic arguments for our 
 dilogarithm ladders contain arbitrarily many nonzero terms. 

 The definition of the term \emph{ladder} has evolved, with regard to the relevant literature on polylogarithms \cite[p.\ 6]{Lewin1991text}. According to 
 a 1987 reference from Abouzahra, Lewin, and Hongnian \cite{AbouzahraLewinXiao1987}, a polylogarithmic ladder of index $N$ and weight $n$ is a linear 
 combination of the form 
\begin{equation}\label{restrictdefinition}
 \frac{\text{Li}_{n}\left( u^{N} \right)}{N^{n-1}}
 - \left( \sum_{r} A_{r} \frac{\text{Li}_{n}\left( u^{r} \right)}{r^{n-1}} 
 + B_{0} \frac{\log^n u}{n!} + B_{2} \frac{\zeta(2) \log^{n-2}u}{(n-2)!} \right) 
\end{equation}
 for rational coefficients $A_{r}$ and $B_{0}$ and $B_{2}$, and where $u$ is algebraic over $\mathbb{Q}$, and where the sum in \eqref{restrictdefinition} 
 is over all positive divisors of $N$ apart from $N$. Furthermore, according to Abouzahra et al., this definition is such that \eqref{restrictdefinition} 
 necessarily vanishes if $n = 2$. However, the definition of the expression \emph{ladder} was subsequently generalized in the standard monograph on 
 structural properties of polylogarithms \cite{Lewin1991text}. According to this more general and standard definition \cite[p.\ 6]{Lewin1991text}, a 
 \emph{ladder} of weight (or order) $n$ and index $N$ and base $u$ is a linear combination of the form 
\begin{equation}\label{correctdefinition}
 L_{n}(N, u) := \frac{\text{Li}_{n}\left( u^{N} \right)}{N^{n-1}} - \left( \sum_{r = 0}^{N-1} \frac{ A_{r} \text{Li}_{n}\left( u^{r} \right) }{r^{n-1}} 
 + \frac{ A_{0} \log^{n}(u) }{n!} \right) 
\end{equation}
 for rational coefficients $A_{r}$ and for a value $u$ that is algebraic over $\mathbb{Q}$, and where the right-hand side of the equality in 
 \eqref{correctdefinition} is equal to a linear combination of the form 
\begin{equation}\label{Llinear}
 L_{n} = \sum_{m=2}^{n} \frac{D_{m} \zeta(m) \log^{n-m}(u) }{(n-m)!} 
\end{equation}
 for scalars $D_{m}$. For example, writing $u = \frac{\sqrt{3} - 1}{2}$, the relation 
\begin{equation}\label{counterexample}
 2 \text{Li}_{2}\left( u^3 \right) - 3 \text{Li}_{2}\left( u^2 \right) - 12 \text{Li}_{2}\left( u \right) - 3 \log^2 u = - 5 \zeta(2) 
\end{equation}
 due to Loxton in 1984 \cite{Loxton1984} is such that the left-hand side of \eqref{counterexample} is referred to as a \emph{ladder} by Lewin 
 \cite{Lewin1984} \cite[p.\ 22]{Lewin1991text}, but the linear combination on the left of \eqref{counterexample} does not satisfy the conditions of the 
 previous and more restrictive definition from Abouzahra et al.\ \cite{AbouzahraLewinXiao1987}. 

 If the numerical values of the form $D_{m}$ in \eqref{Llinear} are rational, then the ladder in \eqref{correctdefinition} is said to be \emph{valid}. 
 As in Lewin's text \cite[p.\ 6]{Lewin1991text}, we express that valid ladders are the only ladders of interest in the study of polylogarithms. For the 
 purposes of this paper, the only ladders to be considered are valid ladders. 

 Our paper is largely based on the following question, and the motivation surrounding this question is clarified via the background material in Section 
 \ref{sectionBackground}. How could we obtain dilogarithm ladders such that the minimal polynomials for the bases $u$ in \eqref{correctdefinition} contain 
 arbitrarily many terms? We introduce a construction of an infinite family of ladders of the desired form, in such a way so as to provide explicit, numerical 
 instances of the closed-form formulas given by our construction. This is inspired by how many of the most celebrated weight-2 ladders are such that the 
 minimal polynomial for the $u$-value in \eqref{correctdefinition} contains a greater number of terms than the base equations for classically known 
 ladders. In this direction, an especially renowned ladder relation is due to Bailey and Broadhurst and is such that 
\begin{align*}
 & \text{{\small $
 \text{Li}_{2}\left( u^{630} \right) 
 - 2 \text{Li}_{2}\left( u^{315} \right) 
 - 3 \text{Li}_{2}\left( u^{210} \right) 
 - 10 \text{Li}_{2}\left( u^{126} \right) 
 - 7 \text{Li}_{2}\left( u^{90} \right) $ }} \\
 & \text{{\small $ + 18 \text{Li}_{2}\left( u^{35} \right)
 + 84 \text{Li}_{2}\left( u^{15} \right) 
 + 90 \text{Li}_{2}\left( u^{14} \right) 
 - 4 \text{Li}_{2}\left( u^{9} \right) 
 + 339 \text{Li}_{2}\left( u^{8} \right) $ }} \\
 & \text{{\small $ + 45 \text{Li}_{2}\left( u^{7} \right) 
 + 265 \text{Li}_{2}\left( u^{6} \right) 
 - 273 \text{Li}_{2}\left( u^{5} \right) 
 - 678 \text{Li}_{2}\left( u^{4} \right) 
 - 1016 \text{Li}_{2}\left( u^{3} \right) $ }} \\
 & \text{{\small $ - 744 \text{Li}_{2}\left( u^{2} \right) 
 - 804 \text{Li}_{2}\left( u \right) 
 - 22050 \log^{2} u = -2003 \zeta(2) $ }} 
\end{align*}
 for the root $u$ of 
\begin{equation}\label{baseBB}
 x^{10}+x^9-x^7-x^6-x^5-x^4-x^3+x+1 = 0
\end{equation}
 in $(0, 1)$. 

\subsection{A motivating result}\label{subsectionmotivating}
 Let $u$ denote the unique positive root of 
\begin{equation}\label{deg10motivating}
 x^{10}+x^9+x^8+x^7+x^6+x^5-x^4-x^3-x^2-x-1 = 0, 
\end{equation}
 noting the resemblance to \eqref{baseBB}. Then our main construction, as in Section \ref{sectionConstruction} below, can be used to prove the valid 
 ladder relation such that 
\begin{equation}\label{displaymotivating}
 2 \text{Li}_{2}\left( u^6 \right) - 2 \text{Li}_{2}\left( u^5 \right) - \text{Li}_{2}\left( u \right) 
 - 25 \log^2\left( u \right) = -\zeta(2) 
\end{equation}
 holds, and this formula appears to be new. The resemblance between the minimal polynomials in \eqref{baseBB} and \eqref{deg10motivating} 
 motivates our infinite family of generalizations of the ladder relation in \eqref{displaymotivating}. This is further motivated by how ladders with 
 \eqref{baseBB} as the base equation for $u$ are a main object of study in the work of Cohen, Lewin, and Zagier \cite{CohenLewinZagier1992}. 

\section{Background and preliminaries}\label{sectionBackground}
 Among the most basic dilogarithm ladder identities that are known follow in a direct way from classically known functional equations for $\text{Li}_{2}$. 
 For example, for Legendre's chi-function $\chi_{v}(z) = \frac{ \text{Li}_{v}(z) - \text{Li}_{v}(-z) }{2} = \text{Li}_{v}(z) - 2^{-v} \text{Li}_{v}\left( 
 z^{2} \right)$ \cite[\S1.8]{Lewin1981}, a functional equation for $\chi_2$ known to Euler and Legendre provides the ladder relation 
\begin{equation*}
 \text{Li}_{2}\left( u^2 \right) - 4 \text{Li}_{2}\left( u \right) - \log^2\left( u \right) = -\frac{3 \zeta(2) }{2} 
\end{equation*}
 for $u = \sqrt{2} - 1$ \cite[p.\ 19]{Lewin1981}. The first development in the $20^{\text{th}}$ century in the construction of polylogarithm ladders 
 \cite[p.\ 4]{Lewin1991text} is due to Coxeter in 1935 \cite{Coxeter1935}, via relations as in 
\begin{equation*}
 \text{Li}_{2}\left( u^{6} \right) - 4 \text{Li}_{2}\left( u^{3} \right) 
 - 3 \text{Li}_{2}\left( u^{2} \right) + 6 \text{Li}_{2}\left( u \right) = \frac{7 \zeta(2) }{5} 
\end{equation*}
 for $u = \frac{\sqrt{5}-1}{2}$, and a next step foward was due to Watson in 1937 \cite{Watson1937}, via relations as in 
\begin{equation*}
\text{Li}_{2}\left( u^2 \right) - \text{Li}_{2}(u) + \log^2 u = -\frac{\zeta(2)}{7}
\end{equation*}
 for the positive root $u$ of $x^{3} + 2 x^2 - x - 1 = 0$. 
 
 What may be seen as a key discovery in the context of dilogarithm ladders is given by how Lewin found that powers of a \emph{fixed} variable provide 
 ways of examining the structure of $\text{Li}_{2}$ through functional equations such as Abel's 5-term relation \cite{Lewin1984}. 
 For example, from Abel's equation, it is immediate that 
\begin{multline*}
 \text{Li}_{2}\left( u^{n - p - q} \right) - \text{Li}_{2}\left( u^{n - p} \right) - \text{Li}_{2}\left( u^{n-q} \right) + \\ 
 \text{Li}_{2}\left( u^{p} \right) + \text{Li}_{2}\left( u^{q} \right) + p q \log^2 u = \zeta(2) 
\end{multline*}
 for 
\begin{equation}\label{base4term}
 u^{p} + u^{q} - u^{n} - 1 = 0, 
\end{equation}
 providing one of the most basic instantiations of dilogarithm ladders. The family of ladders associated with the base equation in \eqref{base4term} 
 may be seen as having the advantage of producing arbitrarily high-index ladders, along with the the disadvantage given by how $u$ is limited by it 
 being required to satisfy a $4$-term polynomial relation. This gives light to the research interest in the construction of dilogarithm ladders with base 
 equations with arbitrarily many terms. This leads us toward the preliminaries on functional equations for $\text{Li}_{2}$ presented below. 

 Of basic importance in the study of dilogarithm ladders is the 5-term functional equation due to Abel whereby 
\begin{multline*}
 \text{Li}_{2}\left( \frac{x}{1-y} \right) 
 + \text{Li}_{2}\left( \frac{y}{1-x} \right) 
 - \text{Li}_{2}\left( \frac{x}{1-x} \frac{y}{1-y} \right) 
 - \text{Li}_{2}(x) 
 - \text{Li}_{2}(y) \\ 
 = \log(1-x) \log(1-y), 
\end{multline*}
 and special cases of this relation provide the complement, the duplication, and the inversion formulas such that 
\begin{align}
 \text{Li}_{2}(z) + \text{Li}_{2}(1-z) & = \frac{\pi^2}{6} - \log z \log(1-z), \nonumber \\ 
 \text{Li}_{2}(z) + \text{Li}_{2}(-z) & = \frac{1}{2} \text{Li}_{2}\left( z^{2} \right), \nonumber \\
 \text{Li}_{2}(-z) + \text{Li}_{2}\left( -\frac{1}{z} \right) & = -\frac{\pi^2}{6} - \frac{1}{2} \log^2 z. \nonumber 
\end{align}
 Of key importance in our work is a duplication formula derived from the five-term relation for \emph{Rogers' dilogarithm} $L(z) := \text{Li}_{2}(z) + 
 \frac{1}{2} \log z \log(1-z)$, referring to the work of Kirillov \cite{Kirillov1995} for background on Rogers' dilogarithm, with $\text{Li}_{2}$ sometimes 
 referred to as \emph{Euler's dilogarithm}. This five-term relation is such that 
\begin{equation*}
 L(x) + L(y) = L(xy) + L\left( \frac{x(1-y)}{1 - xy} \right) + L\left( \frac{y(1-x)}{1 - x y} \right) 
\end{equation*}
 for $0 < x, y < 1$. 

\section{Construction}\label{sectionConstruction}
 A remarkable formula discovered by Khoi in 2014 \cite{Khoi2014} in a knot-theoretic context is such that 
\begin{equation}\label{Khoiformula}
 L\left( \frac{1}{\phi \left( \phi + \sqrt{\phi} \right) } \right) - L\left( \frac{\phi}{\phi + \sqrt{\phi}} \right) 
 = \frac{\pi^2}{20}, 
\end{equation}
 for the golden ratio $\phi = \frac{\sqrt{5} + 1}{2}$. The formula in \eqref{Khoiformula} was determined by Khoi by evaluating the Seifert volume 
 of manifolds produced via the application Dehn surgery on the figure-eight knot. It was left as an open problem by Khoi to determine whether or not the 
 formula in \eqref{Khoiformula} is \emph{accessible}, i.e., provable through applications of equivalent formulations 5-term and 2-variable relations for 
 $\text{Li}_{2}$. This open problem was solved by Lima in 2017 \cite{Lima2017}. In this direction, Lima applied Rogers' five-term relation to prove that 
\begin{equation}\label{Abelduplication}
 L(z) = L\left( \frac{1}{2-z} \right) + \frac{1}{2} L\left( 2z - z^2 \right) - \frac{\pi^2}{12}, 
\end{equation}
 so that Khoi's formula follows directly from \eqref{Abelduplication} by setting $z = \frac{1}{\phi(\phi + \sqrt{\phi})}$ and by evaluating one of the 
 resultant terms in closed form according to a classically known value for $\text{Li}_{2}\left( \frac{1}{\phi^2} \right)$. Since Khoi's open problem can be 
 solved directly as a special case of the identity in \eqref{Abelduplication} proved in 2017, this motivates our further applications of \eqref{Abelduplication}. 
 Based on extant literature on or related to polylogarithm ladders 
 \cite{AbouzahraLewin1985,AbouzahraLewin1986,AbouzahraLewinXiao1987,BorweinBroadhurstKamnitzer2001,CohenLewinZagier1992,Gangl2013,
GordonMcIntosh1997,Kirillov1995,Lewin1982,Lewin1993,Lewin1984,Lewin1986,Lewin1991text,Lewin1991Okayama}, the ladders we obtain from Theorems  
 \ref{theoremeven} and   \ref{oddtheorem} below appear to be new.  

\begin{theorem}\label{theoremeven}
 For a positive integer $n$, let $u$ denote the unique positive root of 
\begin{equation}\label{evendegree}
 x^{2n} + x^{2n-1} + \cdots + x^{n} - x^{n-1} - x^{n-2} - \cdots - x - 1 = 0. 
\end{equation}
 Then the ladder relation 
\begin{equation*}
 2 \text{\emph{Li}}_{2}\left( u^{n+1} \right) - 2 \text{\emph{Li}}_{2}\left( u^{n} \right) 
 - \text{\emph{Li}}_{2}\left( u \right) - n^2 \log^{2}\left( u \right) = -\zeta(2) 
\end{equation*}
 holds. 
\end{theorem}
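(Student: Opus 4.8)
The plan is to derive the ladder from a single specialization of Lima's identity \eqref{Abelduplication}, followed by a translation from Rogers' dilogarithm back to $\text{Li}_{2}$. First I would simplify the base equation: multiplying \eqref{evendegree} by $x - 1$ telescopes the two geometric sums, so the unique positive root $u$ of \eqref{evendegree} satisfies
\begin{equation*}
 u^{2n+1} - 2 u^{n} + 1 = 0 , \qquad u \neq 1 .
\end{equation*}
A brief sign analysis of $q(x) = x^{2n+1} - 2 x^{n} + 1$ confirms the picture: since $q'(x) = x^{n-1}( (2n+1) x^{n+1} - 2n )$ has a single positive zero, which lies in $(0,1)$, and since $q(0) = 1 > 0 = q(1)$ with $q'(1) = 1 \neq 0$, the only positive roots of $q$ are $x = 1$ together with one root $u \in (0,1)$. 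Hence the positive root of \eqref{evendegree} lies in $(0,1)$; in particular $1 - u$, $1 - u^{n}$ and $1 - u^{n+1}$ are all positive, so every logarithm appearing below is real. The relation can be rewritten as $u^{n+1} + u^{-n} = 2$, i.e.\ $2 - u^{n+1} = u^{-n}$, and this is the only fact about $u$ that I will use.

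Next I would put $z = u^{n+1}$ into \eqref{Abelduplication}. Then $\frac{1}{2 - z} = \frac{1}{u^{-n}} = u^{n}$ and $2 z - z^{2} = u^{n+1} ( 2 - u^{n+1} ) = u^{n+1} \cdot u^{-n} = u$, and the three arguments $u^{n+1}, u^{n}, u$ all lie in $(0,1)$, so \eqref{Abelduplication} specializes to
\begin{equation*}
 L( u^{n+1} ) = L( u^{n} ) + \tfrac{1}{2} L(u) - \frac{\pi^{2}}{12} .
\end{equation*}
Substituting the definition $L(z) = \text{Li}_{2}(z) + \tfrac{1}{2} \log z \log(1 - z)$ and multiplying through by $2$ then yields
\begin{equation*}
 2\,\text{Li}_{2}( u^{n+1} ) - 2\,\text{Li}_{2}( u^{n} ) - \text{Li}_{2}(u) = E - \zeta(2) ,
\end{equation*}
where $\pi^{2}/6 = \zeta(2)$ has been used and where $E := - \log( u^{n+1} )\log( 1 - u^{n+1} ) + \log( u^{n} )\log( 1 - u^{n} ) + \tfrac{1}{2} \log u \log(1 - u)$.

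It then remains to show that $E = n^{2} \log^{2} u$. From $2 - u^{n+1} = u^{-n}$ we get $1 - u^{n+1} = u^{-n} - 1 = ( 1 - u^{n} ) / u^{n}$, hence $\log( 1 - u^{n+1} ) = \log( 1 - u^{n} ) - n \log u$; inserting this into $E$ cancels the $\log( 1 - u^{n} )$ terms and leaves $E = n(n+1) \log^{2} u - ( \log u ) [\, \log( 1 - u^{n} ) - \tfrac{1}{2} \log(1 - u) \,]$. Finally, $( 1 - u^{n} )^{2} = u^{2n} (1 - u)$ is algebraically equivalent to $u^{2n+1} - 2 u^{n} + 1 = 0$, so (taking logarithms of positive quantities) $\log( 1 - u^{n} ) - \tfrac{1}{2} \log(1 - u) = n \log u$, and therefore $E = n(n+1) \log^{2} u - n \log^{2} u = n^{2} \log^{2} u$, which is exactly the assertion of the theorem.

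The computation is essentially bookkeeping once two elementary observations are in hand, and these are where I expect the only genuine content to lie. The first is that $z = u^{n+1}$ is precisely the value for which both non-trivial arguments of \eqref{Abelduplication} collapse, one onto $u^{n}$ and the other onto $u$. The second is that the single polynomial relation $u^{2n+1} - 2 u^{n} + 1 = 0$ simultaneously encodes the two logarithmic identities $1 - u^{n+1} = ( 1 - u^{n} ) / u^{n}$ and $( 1 - u^{n} )^{2} = u^{2n}(1 - u)$ that are needed to pass from the Rogers-dilogarithm relation to the stated $\text{Li}_{2}$ ladder. I would also be careful to have the bound $0 < u < 1$ in place before invoking \eqref{Abelduplication} and before manipulating any of the logarithms, since both Lima's identity and the simplification of $E$ rely on all arguments lying in $(0,1)$.
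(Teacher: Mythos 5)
Your proof is correct, and it follows the same skeleton as the paper's: specialize Lima's identity \eqref{Abelduplication} at $z = u^{n+1}$, verify that the two auxiliary arguments collapse to $u^{n}$ and $u$, and then reduce the leftover logarithmic terms to a polynomial identity in $u$. Where you genuinely diverge is in the algebraic engine. The paper verifies $\frac{1}{2-z}=u^{n}$, $2z-z^{2}=u$, and the cyclotomic relation $\frac{(1-u)(1-u^{n})^{2n}}{(1-u^{n+1})^{2n+2}}=u^{2n^{2}}$ by repeated expansion and reindexing of the sums coming directly from \eqref{evendegree}. You instead multiply \eqref{evendegree} by $x-1$ to telescope it into the trinomial $u^{2n+1}-2u^{n}+1=0$, equivalently $2-u^{n+1}=u^{-n}$, after which all three verifications become one-line consequences: the two argument collapses are immediate, and the logarithmic residue reduces to $1-u^{n+1}=(1-u^{n})/u^{n}$ together with $(1-u^{n})^{2}=u^{2n}(1-u)$, both of which are literal restatements of the trinomial. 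This buys three things the paper's write-up lacks or leaves implicit: a uniform argument with no case split between $n=1$ and $n>1$; an explicit and checkable accounting of the passage from Rogers' $L$ back to $\mathrm{Li}_{2}$ (the paper compresses this into the unexplained exponent $u^{2n^{2}}$); and a verification that $u\in(0,1)$, which is needed both for the real validity of \eqref{Abelduplication} at these arguments and for the legitimacy of taking logarithms of $1-u$, $1-u^{n}$, $1-u^{n+1}$. I find no gaps; the one cosmetic remark is that your sign analysis of $q$ does not really need $q'(1)\neq 0$, only that $x=1$ lies on the increasing branch of $q$ beyond its unique positive critical point.
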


\begin{proof}
  We begin by setting $z = u^{n + 1}$ in \eqref{Abelduplication},   for $u$ as specified.   We find that $\frac{1}{2-z} = u^n$, since  this is equivlaent to  
\begin{equation}\label{evenfirstarg}
  1 = 2 u^n - u \left( \sum_{k=0}^{n - 1} u^k - \sum_{k=n}^{2n-1} u^k \right),  
\end{equation}
  with \eqref{evenfirstarg} equivalent to 
\begin{equation}\label{evenequivarg1}
 1 = 2 u^n-\sum _{k=0}^{n-1} u^{k+1}+\sum _{k=n}^{2 n-2} u^{k+1}+\sum _{k=0}^{n-1} u^k-\sum _{k=n}^{2 n-1} u^k, 
\end{equation}
 so that a reindexing argument gives us that 
 \eqref{evenequivarg1} holds. 
 Similarly, we claim that 
 $2 z - z^2 = u$. This equality is equivalent to 
\begin{equation}\label{evenequivfurther}
 2 u^{n+1}-u^2 \left(\sum _{k=0}^{n-1} u^k-\sum _{k=n}^{2 n-1} u^k\right) = u, 
\end{equation}
 with \eqref{evenequivfurther} equivalent to 
\begin{equation}\label{2024120125042A2M2}
 2 u^{n+1}-\sum _{k=0}^{n-1} u^{k+2}+\sum _{k=n}^{2 n-3} u^{k+2}+(u+1) u^{2 n}
 = u 
\end{equation}
 for $n > 1$ and similarly for $n = 1$, with \eqref{2024120125042A2M2} equivalent to 
 $$ 2 u^{n+1}-\sum _{k=0}^{n-1} u^{k+2}+\sum _{k=n}^{2 n-3} u^{k+2}+\sum _{k=0}^{n-1} u^{k+1}+\sum _{k=0}^{n-1} u^k-\sum _{k=n}^{2 n-1} u^{k+1}-\sum
 _{k=n}^{2 n-1} u^k
 = 0 $$
 for $n > 1$ and similarly for $n = 1$. In turn, this is equivalent to 
\begin{equation}\label{evenfinalbeforeL}
 2 u^{n+1}-\sum _{k=0}^{n-1} u^{k+2}+\sum _{k=n}^{2 n-3} u^{k+2}+\sum _{k=0}^{n-1} u^{k+1}-\sum _{k=n}^{2 n-2} u^{k+1} = u 
\end{equation}
 for $n > 1$ and similarly for $n = 1$, so that a reindexing argument gives us that \eqref{evenfinalbeforeL} holds. So, from \eqref{Abelduplication}, for $z$ as specified, we find that 
 $ L\left( u^{n+1} \right) - 2 L\left( u^{n} \right) - L\left( u \right) = -\zeta(2)$. 
 By rewriting the left-hand side
 as a linear combination of 
 $\text{Li}_{2}$-expressions and logarithmic expressions , we find that it remains to prove 
 the cyclotomic equation such that 
\begin{equation}\label{20241201528AcpommM1A}
 \frac{(1-u) \left(1-u^n\right)^{2 n}}{\left(1-u^{n+1}\right)^{2 n+2}}
 = u^{2 n^2}. 
\end{equation}
 In this direction, we claim that 
 the relation 
\begin{equation}\label{202411301043PM1A}
 \frac{u \left(1-u^n\right)}{1-u^{n+1}} 
 = u^{n+1}, 
\end{equation}
 holds, with \eqref{202411301043PM1A} equivalent to 
\begin{equation}\label{202411301055PM4A}
 u - u^{n+1} = u^{n+1} - u^{2}
 \left( \sum_{k=0}^{n-1} u^{k} - \sum_{k=n}^{2n-1} u^k \right). 
\end{equation}
 Expanding \eqref{202411301055PM4A} and making use of the vanishing of \eqref{evendegree} for $x = u$, we see that 
 \eqref{202411301055PM4A} is equivalent to 
\begin{equation}\label{lastlabeleven}
 u - u^{n+1} 
 = u^{n+1}-\sum _{k=0}^{n-1} u^{k+2}+\sum _{k=n}^{2 n-3} u^{k+2}+\sum _{k=0}^{n-1} u^{k+1}-\sum _{k=n}^{2 n-2} u^{k+1}. 
\end{equation}
 Applying a reindexing, we find that \eqref{lastlabeleven} is equivalent to 
 $ -u^{n+1}
 = \sum _{k=n}^{2 n-3} u^{k+2}-\sum _{k=n}^{2 n-2} u^{k+1}$, 
 which holds according to a reindexing. 
\end{proof}

\begin{example}
 The $n = 5$ case provides the motivating result highlighted in Section \ref{subsectionmotivating}. 
\end{example}

\begin{theorem}\label{oddtheorem}
 For a positive integer $n$, let $u$ denote the unique positive root of $$ x^{2n+1} + x^{2n} + \cdots + x^{n} - x^{n-1} - x^{n-2} - \cdots - x - 1 = 
 0. $$ Then the ladder relation $$ 2 \text{\emph{Li}}_{2}\left( u^{n+2} \right) - 2 \text{\emph{Li}}_{2}\left( u^{n} \right) - 
 \text{\emph{Li}}_{2}\left( u^2 \right) -n^2 \log^2\left( u \right) = -\zeta(2) $$ holds. 
\end{theorem}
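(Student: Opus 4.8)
The plan is to mirror the proof of Theorem~\ref{theoremeven}, replacing the substitution $z = u^{n+1}$ by $z = u^{n+2}$ in the Rogers-dilogarithm identity \eqref{Abelduplication}. First I would observe that the defining polynomial $x^{2n+1} + x^{2n} + \cdots + x^n - x^{n-1} - \cdots - x - 1$ has its nonzero coefficients equal to $+1$ in degrees $n$ through $2n+1$ and to $-1$ in degrees $0$ through $n-1$, hence exactly one sign change, so Descartes' rule gives a unique positive root $u$; since the polynomial takes the values $-1$ at $x = 0$ and $2$ at $x = 1$, we have $u \in (0,1)$, whence $u^{n+2}, u^n, u^2 \in (0,1)$ and $\tfrac{1}{2 - u^{n+2}} \in (\tfrac12, 1)$, so that \eqref{Abelduplication} is applicable at $z = u^{n+2}$. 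The heart of the matter is to show that this choice of $z$ yields $\tfrac{1}{2-z} = u^n$ and $2z - z^2 = u^2$. Both follow from the single polynomial relation $2 u^n - u^{2n+2} = 1$: the first equality is literally $1 = (2 - u^{n+2})\,u^n$, and then, since $2 - z = u^{-n}$, the second is automatic, as $2z - z^2 = z(2 - z) = u^{n+2}\cdot u^{-n} = u^2$. To verify $2 u^n - u^{2n+2} = 1$, I would isolate the leading term of the defining equation as $u^{2n+1} = \sum_{k=0}^{n-1} u^k - \sum_{k=n}^{2n} u^k$, multiply through by $u$ to obtain $u^{2n+2} = \sum_{k=1}^{n} u^k - \sum_{k=n+1}^{2n+1} u^k$, and substitute; a reindexing then collapses $2u^n - u^{2n+2}$ to $\sum_{k=n}^{2n+1} u^k - \sum_{k=1}^{n-1} u^k$, which equals $1$ by the defining equation (with the convention that the final sum is empty when $n = 1$, the degenerate case already flagged in Theorem~\ref{theoremeven}).

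Granting these two substitutions, \eqref{Abelduplication} at $z = u^{n+2}$, multiplied by $2$, becomes $2 L(u^{n+2}) - 2 L(u^n) - L(u^2) = -\zeta(2)$. I would then expand each Rogers dilogarithm via $L(w) = \text{Li}_2(w) + \tfrac12 \log w \, \log(1-w)$: the $\text{Li}_2$-terms reassemble into the desired $2\,\text{Li}_2(u^{n+2}) - 2\,\text{Li}_2(u^n) - \text{Li}_2(u^2)$, and the remaining logarithmic terms each carry a factor $\log u$, so that, after dividing by $\log u \neq 0$, what is left to check is
\begin{equation*}
 (n+2)\log(1 - u^{n+2}) - n \log(1 - u^n) - \log(1 - u^2) = -n^2 \log u .
\end{equation*}
Exponentiating both sides, this is the cyclotomic identity
\begin{equation*}
 \frac{(1 - u^2)\left(1 - u^n\right)^{n}}{\left(1 - u^{n+2}\right)^{n+2}} = u^{n^2} .
\end{equation*}

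To establish this identity I would recycle the relations already in hand. First, $\tfrac{1}{2-z} = u^n$ is equivalent to $\tfrac{u^2 (1 - u^n)}{1 - u^{n+2}} = u^{n+2}$; raising this to the $n$-th power and cancelling $u^{2n}$ gives $\tfrac{(1 - u^n)^n}{(1 - u^{n+2})^n} = u^{n^2}$. Second, from $2z - z^2 = u^2$ and the algebraic identity $1 - (2z - z^2) = (1 - z)^2$ we obtain $1 - u^2 = (1 - u^{n+2})^2$. Multiplying these two relations produces exactly the displayed cyclotomic identity, which completes the argument.

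None of the conceptual steps should present real difficulty, as they run parallel to Theorem~\ref{theoremeven}. The one place calling for care is the reindexing bookkeeping establishing $2 u^n - u^{2n+2} = 1$ uniformly in $n$, in particular keeping track of which summation ranges degenerate to empty in the small cases ($n = 1$ here), precisely as in the ``for $n > 1$ and similarly for $n = 1$'' steps of the even-degree theorem.
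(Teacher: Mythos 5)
Your proposal is correct and follows essentially the same route as the paper: the substitution $z = u^{n+2}$ in \eqref{Abelduplication}, the verification that $\tfrac{1}{2-z} = u^n$ and $2z - z^2 = u^2$ from the defining polynomial, and the reduction to the cyclotomic identity $\tfrac{(1-u^2)(1-u^n)^n}{(1-u^{n+2})^{n+2}} = u^{n^2}$. Your handling is in fact slightly cleaner in two spots — deriving both substitution claims from the single relation $2u^n - u^{2n+2} = 1$ via $2z - z^2 = z(2-z)$, and obtaining $1 - u^2 = (1-u^{n+2})^2$ directly from $1-(2z-z^2)=(1-z)^2$ rather than by a further reindexing — but these are streamlinings of the same argument, not a different one.
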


\begin{proof}
 We begin by setting $z = u^{n+2}$ in \eqref{Abelduplication}. We claim that $\frac{1}{2-z} = u^{n}$ and that $2 z-z^2 = u^2$, and reindexing 
 arguments as in our proof of Theorem \ref{theoremeven} 
 may be applied to prove this claim. 
 So, we find that \eqref{Abelduplication} provides us with 
 $ 2 L\left( u^{n+2} \right) - 2 L\left( u^{n} \right) - L\left( u^2 \right) = -\zeta(2)$. 
 Following a similar approach as in the proof for 
 Theorem \ref{theoremeven}, 
 it remains to prove the cyclotomic equation 
\begin{equation}\label{cyclotomicodd}
 \frac{\left(1-u^2\right) \left(1-u^n\right)^n}{\left(1-u^{n+2}\right)^{n+2}} 
 = u^{n^2}. 
\end{equation}
 In this direction, 
 the relation 
\begin{equation}\label{oddaftercyclotomic}
 \frac{u^2 \left(1-u^n\right)}{1-u^{n+2}} = u^{n+2}. 
\end{equation}
 is equivalent to 
 $$ u^2-u^{n+2}
 = u^{n+2}-\sum _{k=0}^{n-1} u^{k+3}+\sum _{k=n}^{2 n-3} u^{k+3}+\sum _{k=0}^{n-1} u^k-\sum _{k=n}^{2 n} u^k+u^{2 n+3}+u^{2 n+2} $$ 
 for $n \geq 2$ and similarly for the $n = 1$ case. We manipulate the right-hand side so that 
\begin{align*}
 & -\sum _{k=0}^{n-1} u^{k+3}+\sum _{k=0}^{n-1} u^k+u^{2 n+3}+u^{2 n+2}-u^n-u^{n + 1} \\ 
 = & -\sum _{k=0}^{n-1} u^{k+3}+\sum _{k=0}^{n-1} u^k+\sum _{k=0}^{n-1} u^{k+2}+\sum _{k=0}^{n-1} u^{k+1}-2 \sum _{k=n}^{2 n} u^{k+1}-u^n-u^{2 n+2} \\
 = & -\sum _{k=0}^{n-1} u^{k+3}+\sum _{k=0}^{n-1} u^k+\sum _{k=0}^{n-1} u^{k+2}-\sum _{k=n}^{2 n} u^{k+1}-u^n \\
 = & -\sum _{k=0}^{n-1} u^{k+3}+\sum _{k=0}^{n-1} u^{k+2}, 
\end{align*}
 giving us the desired evaluation as $ u^2-u^{n+2}$, and similarly for the $n = 1$ case. From \eqref{oddaftercyclotomic}, the cyclotomic relation in 
 \eqref{cyclotomicodd} is equivalent to $1-u^2 = \big(1-u^{n+2}\big)^2$, which may be shown via the reindexing approach above. 
\end{proof}

 According to Gordon and McIntosh \cite{GordonMcIntosh1997}, an \emph{$L$-algebraic number} $\theta$ is such that $0 < \theta < 1$ and satisfies a 
 vanishing condition of the form $ \sum_{k=0}^{n} c_{k} L\left( \theta^{k} \right) = 0 $ for integers $c_{k}$ that are not all equal to $0$, with Gordon 
 and McIntosh having obtained a number of values of degrees $3$ and $4$ satisfying the given conditions. This motivates our construction of 
 $L$-algebraic values of unboundedly high degree and with minimal polynomials with unboundedly many terms. In this direction, the quintic case of 
 Theorem \ref{oddtheorem} provides an analogue of results from Abouzahra and Lewin \cite{AbouzahraLewin1986}, as below. 

\subsection{The quintic case}
  Barrucand (cf.\ \cite{AbouzahraLewin1986}) considered the 
 problem as to whether or not there could be valid ladders with arguments given by the roots in $(0, 1)$ of 
\begin{align}
 u^5 - u^3 + u^2 + u - 1 & = 0 \ \text{and} \ \label{uquintic} \\ 
 v^{5} + v^4 - v^3 + v^2 - 1 & = 0. \label{vquintic} 
\end{align}
 By manipulating \eqref{uquintic} and \eqref{vquintic}
 so as to produce cyclotomic equations, 
 Abouzahra and Lewin obtained ladder relations such as 
\begin{multline*}
 2 \text{Li}_{2}\left( u^{9} \right) 
 + \text{Li}_{2}\left( u^{8} \right) - 2 \text{Li}_{2}\left( u^{6} \right) - 
 2 \text{Li}_{2}\left( u^4 \right) - \\ 
 2 \text{Li}_{2}\left( u^3 \right) - 4 \text{Li}_{2}\left( u \right) - 13 \log^2 u 
 = -4 \zeta(2). 
\end{multline*}
 Our construction can be used to obtain a dilogarithm ladder with a very similar 
 and inequivalent base equation, relative to \eqref{uquintic}--\eqref{vquintic}. 
 That is, by letting $w$ denote the positive root of $ w^5 + w^4 + w^3 + w^2 + w-1 = 0$, the ladder relation 
\begin{equation}\label{motivatingquintic}
 2 \text{Li}_{2}\left( w^5 \right) - \text{Li}_{2}\left( w^4 \right) - 2 \text{Li}_{2}(w) - \log^2(w) 
 = -\zeta(2) 
\end{equation}
 holds, with \eqref{motivatingquintic} 
 providing the $n = 2$ case of Theorem \ref{oddtheorem}. 

\section{Conclusion}
 A construction due to Abouzahra and Lewin in 1985 \cite{AbouzahraLewin1985} of dilogarithm ladders relied on Kummer's functional equations of two 
 variables, and explored the problem of producing ladders by increasing the weight of a known ladder, so as to experimentally obtain conjectured ladders, 
 of higher weights beyond what is permissible by Wechsung's theorem. A related approach was subsequently explored by Lewin in 1986 \cite{Lewin1986}. 
 Miscellaneous ladders obtained from 
 functional equations for $\text{Li}_{2}$ 
 were obtained by 
 Abouzahra and Lewin \cite[\S5]{Lewin1991text}, 
 as in with the application of 
 a 15-term functional equation 
 derived from the Clausen function and attributed to 
 both Rogers \cite{Rogers1907} and Kummer \cite{Kummer1840} 
 to obtain that 
 $$ 3 \text{Li}_{2}\left( u^4 \right) - 
 4 \text{Li}_{2}\left( u^3 \right) - 
 6 \text{Li}_{2}\left( u^2 \right) - 12 \text{Li}_{2}\left( u \right) 
 - 6 \log^{2}\left( u \right) = -7 \zeta(2) $$ 
 for $u$ as the 
 root of $x^4-2 x^3-2 x+1 = 0$ in $(0, 1)$. 
 This is in contrast to our systematic approach toward 
 constructing ladders with base equations with arbitrarily many terms, as suggested
 by the motivating example in Section 
 \ref{subsectionmotivating}. 

 The \emph{supernumary ladders} due to Abouzahra and Lewin \cite[\S5]{Lewin1991text} often rely on base equations following by setting the 
 arguments of Abel-type 5-term relations to powers of a fixed base, with the use of cyclotomic equations. For example, by writing $u$ in place of the 
 positive solution of 
\begin{equation}\label{fixed3term}
 u^{p} + u^q - 1 = 0
\end{equation}
 for the $p = 4$ and $q = 1$ case, an index-21 cyclotomic equation satisfied by $u$ led Abouzahra and Lewin to experimentally discover the 
 index-21 ladder relation 
\begin{multline*}
 2 \text{Li}_{2}\left( u^{21} \right) - 
 6 \text{Li}_{2}\left( u^{7} \right) - 
 14 \text{Li}_{2}\left( u^{6} \right) + \\ 
 21 \text{Li}_{2}\left( u^{2} \right) + 
 5 \text{Li}_{2}\left( u \right) + 
 31 \log^{2}\left( u \right) = 
 11 \zeta(2). 
\end{multline*}
 The minimal polynomial for $u$ is again of the form shown in \eqref{fixed3term}, and base equations of the form shown in \eqref{fixed3term} are 
 considered to be limited \cite[p.\ 11]{Lewin1991text}, recalling the minimal polynomial in \eqref{baseBB} associated with the Bailey--Broadhurst ladder 
 reproduced in Section \ref{sectionIntro}. This motivates our construction of dilogarithm ladders with base equations with arbitrarily many terms, and leads 
 leave the following for a future subject of research. How could our construction be extended with the use of cyclotomic equations, so as to obtain 
 dilogarithm relations with more nonzero $A_{r}$-coefficients in \eqref{correctdefinition}, using a similar approach as in the work of Abouzahra and Lewin 
 \cite[\S5]{Lewin1991text}? We also leave the following as an open problem. Numerically, we have discovered the conjectural relation 
\begin{multline*} 
 \text{{Li}}_2\left( \frac{1}{2 \phi ^2} -\frac{1}{2} \sqrt{-1-\frac{1}{\phi^2}} \right) 
 - \text{{Li}}_2\left(\frac{ 1 - \sqrt{(1-2 \phi) (1 + 2 \phi)} }{2} \right) = \\ 
 \frac{\ln ^2(\phi )}{2} + \frac{3 \pi \ln (\phi ) i }{5} + \frac{\pi ^2}{150}, 
\end{multline*}
 noting the resemblance and inequivalence to Khoi's formula in 
 \eqref{Khoiformula}. Could techniques and dilogarithm relations as in 
 Section \ref{sectionConstruction} be applied to prove this experimentally discovered result? 

\subsection*{Acknowledgements}
 The author is grateful to acknowledge support from a Killam Postdoctoral Fellowship from the Killam Trusts. 
 The author thanks 
 Kunle Adegoke and F\'{a}bio M.\ S.\ Lima for useful feedback.

 \

\noindent {\textsc{John M. Campbell}} 

\vspace{0.1in}

\noindent Department of Mathematics and Statistics

\noindent Dalhousie University

\noindent 6299 South St, Halifax, NS B3H 4R2

\noindent {\tt jmaxwellcampbell@gmail.com}

\end{document}